\newtheorem*{theorem}{Theorem}
\numberwithin{equation}{section}
\begin{document}

\title{A simple proof of the Lebesgue decomposition theorem}

\author{Tam\'as Titkos}
\address{Tam\'as Titkos, Department of Applied Analysis, E\"otv\"os Lor\'and University, P\'azm\'any P\'eter s\'et\'any 1/c, H-1117, Budapest, Hungary; }
\email{titkos@cs.elte.hu}

\keywords{Measures, Lebesgue decomposition}
\subjclass[2000]{Primary 28A12}

\maketitle
The aim of this short note is to present an elementary, self-contained, and direct proof for the classical Lebesgue decomposition theorem. In fact, I will show that the absolutely continuous part just measures the squared semidistance of the characteristic functions from a suitable subspace.

This approach also gives a decomposition in the finitely additive case, but it differs from the Lebesgue-Darst decomposition \cite{raex}, because the involved absolute continuity concepts are different.\\

\noindent\textbf{Notations.} Let $\mathcal{A}$ be a $\sigma$-algebra over $X\neq\emptyset$, and consider the finite measures $\mu,\nu:\mathcal{A}\to\mathbb{R}_+$ on it. The measure $\mu$ is $\nu$-absolutely continuous ($\mu\ll\nu$, in symbols) if $\nu(A)=0$ implies $\mu(A)=0$ for all $A\in\mathcal{A}$. Singularity of $\mu$ and $\nu$ (denoted by $\mu\perp\nu$) means that the only measure dominated by both $\mu$ and $\nu$ is the zero measure. As it is known, this is equivalent with the existence of a measurable set $P\in\mathcal{A}$ such that $\mu(P)=\nu(X\setminus P)=0$.

\begin{theorem} Let $\mu$ and $\nu$ be finite measures on $\mathcal{A}$. Then $\mu$ splits uniquely into $\mu_{\mathrm{ac}}\ll\nu$ and $\mu_{\mathrm{s}}\perp\nu$.
\end{theorem}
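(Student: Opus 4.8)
The plan is to realize the geometric picture announced in the introduction inside the Hilbert space $L^2(\mu)$ of $\mu$-square-integrable functions, taken modulo $\mu$-a.e.\ equality. Since $\mu$ is finite, every characteristic function $\mathds{1}_A$ with $A\in\mathcal{A}$ belongs to it. Consider the closed subspace
\[M:=\overline{\operatorname{span}}\,\bigl\{\,\mathds{1}_N : N\in\mathcal{A},\ \nu(N)=0\,\bigr\}\subseteq L^2(\mu),\]
and define $\mu_{\mathrm{ac}}(A):=\operatorname{dist}(\mathds{1}_A,M)^2$ and $\mu_{\mathrm{s}}(A):=\mu(A)-\mu_{\mathrm{ac}}(A)$. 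Because $\operatorname{dist}(\mathds{1}_A,M)^2\leq\|\mathds{1}_A\|_{L^2(\mu)}^2=\mu(A)$, the latter is nonnegative, and the whole theorem comes down to evaluating this distance.

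The engine of the argument is a short exhaustion lemma. Choose $N_k\in\mathcal{A}$ with $\nu(N_k)=0$ and $\mu(N_k)\to s:=\sup\{\mu(N):N\in\mathcal{A},\ \nu(N)=0\}$, which is finite since $s\leq\mu(X)$, and put $P:=\bigcup_k N_k$. Countable subadditivity gives $\nu(P)=0$, hence $\mu(P)=s$, and for any $\nu$-null $N$ the chain $\mu(P)\leq\mu(N\cup P)\leq s=\mu(P)$ forces $\mu(N\setminus P)=0$; so $P$ is, modulo $\mu$-null sets, the largest $\nu$-null set. I would then establish
\[M=\{\,f\in L^2(\mu): f=0\ \mu\text{-a.e.\ on }X\setminus P\,\}.\]
The inclusion ``$\subseteq$'' holds because each generator $\mathds{1}_N$ vanishes off $N$ and $\mu(N\setminus P)=0$; for ``$\supseteq$'', any such $f$ satisfies $f=f\mathds{1}_P$ and is an $L^2(\mu)$-limit of simple functions $\varphi_n$, hence of $\varphi_n\mathds{1}_P$, and each $\varphi_n\mathds{1}_P$ is a finite linear combination of characteristic functions of subsets of $P$, which are $\nu$-null. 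Consequently the orthogonal projection of $f\in L^2(\mu)$ onto $M$ is $\mathds{1}_P f$, with $f-\mathds{1}_P f=\mathds{1}_{X\setminus P}f$ orthogonal to $M$, so
\[\mu_{\mathrm{ac}}(A)=\bigl\|\mathds{1}_A-\mathds{1}_P\mathds{1}_A\bigr\|_{L^2(\mu)}^2=\mu(A\setminus P),\qquad \mu_{\mathrm{s}}(A)=\mu(A\cap P).\]

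The remaining points are routine. The set functions $\mu(\,\cdot\setminus P)$ and $\mu(\,\cdot\cap P)$ are finite measures summing to $\mu$; since $\mu_{\mathrm{s}}(X\setminus P)=0=\nu(P)$, the criterion recalled in the Notations gives $\mu_{\mathrm{s}}\perp\nu$; and if $\nu(A)=0$ then $\mu_{\mathrm{ac}}(A)=\mu(A\setminus P)=0$ by maximality of $P$, so $\mu_{\mathrm{ac}}\ll\nu$. For uniqueness, suppose $\mu=\alpha+\beta=\alpha'+\beta'$ with $\alpha,\alpha'\ll\nu$ and $\beta,\beta'\perp\nu$; pick $\nu$-null sets carrying $\beta$ and $\beta'$ and replace them by their union $R$, still $\nu$-null and still carrying both. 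Then $\alpha(A\cap R)=\alpha'(A\cap R)=0$ and $\beta(A\setminus R)=\beta'(A\setminus R)=0$ for every $A\in\mathcal{A}$, so evaluating $\alpha+\beta=\alpha'+\beta'$ on $A\setminus R$ gives $\alpha(A)=\alpha(A\setminus R)=\alpha'(A\setminus R)=\alpha'(A)$, whence $\beta=\mu-\alpha=\mu-\alpha'=\beta'$.

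The only step carrying real content is the identification of $M$ with the functions vanishing $\mu$-a.e.\ off $P$, and behind it the construction of the maximal $\nu$-null set $P$; the rest is bookkeeping. I expect this is also exactly the step one must recast for the finitely additive decomposition alluded to in the introduction, since there no maximal $\nu$-null set need exist and one is forced to work with the best $L^2(\mu)$-approximation to $\mathds{1}_A$ from $M$ directly, rather than through an explicit set $P$.
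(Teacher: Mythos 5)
Your proof is correct, but it takes a genuinely different route from the paper's, even though both start from the same quantity: your $\operatorname{dist}(\mathds{1}_A,M)^2$ coincides with the paper's $\inf_{\psi\in\mathscr{N}}\int_X|\mathds{1}_A-\psi|^2\,\mathrm{d}\mu$, since passing from the algebraic span $\mathscr{N}$ to its closure $M$ does not change the infimum of a continuous function. The paper never constructs a set $P$: it shows directly that the infimum is additive on disjoint sets, deduces countable additivity from $0\leq\mu_{\mathrm{ac}}\leq\mu$, and gets singularity and uniqueness from an abstract maximality property of $\mu_{\mathrm{ac}}$ among measures $\vartheta\leq\mu$ with $\vartheta\ll\nu$. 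You instead run the classical exhaustion argument to produce a $\mu$-essentially maximal $\nu$-null set $P$, identify $M$ with the functions supported on $P$, and read off $\mu_{\mathrm{ac}}=\mu(\,\cdot\setminus P)$ and $\mu_{\mathrm{s}}=\mu(\,\cdot\cap P)$ --- at which point the Hilbert-space scaffolding is no longer load-bearing, since both measures could be defined from $P$ directly. What your version buys is explicitness: the projection onto $M$ is multiplication by $\mathds{1}_P$, countable additivity of the parts is immediate, and singularity is witnessed by $P$ itself. What the paper's version buys is exactly what you observe in your closing paragraph: by never invoking the exhaustion (which needs countable additivity of $\nu$ to make $P$ a null set and of $\mu$ to make the supremum attained), the argument transfers to the finitely additive setting, where no maximal $\nu$-null set need exist and one must work with the infimum itself. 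Your diagnosis of where the two proofs must part ways there is accurate.
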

\begin{proof}
Consider the real vector space $\mathscr{E}$ of real valued $\mathcal{A}$-measurable step-functions and let $\mathscr{N}$ be the linear subspace generated by the characteristic functions of those measurable sets $A$ such that $\nu(A)=0$. Define the set function $\mu_{\mathrm{ac}}$ by
\begin{align*}
\mu_{\mathrm{ac}}(A):=\inf\limits_{\psi\in\mathscr{N}}\int\limits_X|\mathds{1}_A-\psi|^2~\mathrm{d}\mu\hspace{0.5cm}(A\in\mathcal{A}).
\end{align*}
It is clear that $\mu_{\mathrm{ac}}\leq\mu$ ($\psi:=\mathds{1}_{\emptyset}$), and that $\nu(A)=0$ implies $\mu_{\mathrm{ac}}(A)=0$ ($\psi:=\mathds{1}_A$). Furthermore, trivial verification shows that if $A$ and $B$ are disjoint elements of $\mathcal{A}$, then
\begin{align*}
\inf_{\psi\in\mathscr{N}}\int\limits_X |\mathds{1}_{{}_{A\cup B}}-\psi|^2~\mathrm{d}\mu
=\inf_{\psi\in\mathscr{N}}\int\limits_X|\mathds{1}_{{}_{A}}-\psi|^2~\mathrm{d}\mu+\inf_{\psi\in\mathscr{N}}\int\limits_X |\mathds{1}_{{}_{B}}-\psi|^2~\mathrm{d}\mu.
\end{align*}
Since $\mu_{\mathrm{ac}}$ is nonnegative, additive, and dominated by the measure $\mu$, we infer that $\mu_{\mathrm{ac}}$ is a measure itself.

What is left is to show that $\mu_{\mathrm{s}}:=\mu-\mu_{\mathrm{ac}}$ and $\nu$ are singular, and that the decomposition is unique. Both follow immediately from the fact that $\mu_{\mathrm{ac}}$ is maximal among those measures $\vartheta$ such that $\vartheta\leq\mu$ and $\vartheta\ll\nu$. Indeed, let $\vartheta$ be such a measure, $\psi\in\mathscr{N}$, and observe that
\begin{align*}
\vartheta(A)=\int\limits_X|\mathds{1}_A|^2~\mathrm{d}\vartheta=\int\limits_X|\mathds{1}_A-\psi|^2~\mathrm{d}\vartheta\leq\int\limits_X|\mathds{1}_A-\psi|^2~\mathrm{d}\mu.
\end{align*}
Taking the infimum over $\mathscr{N}$ we obtain that $\vartheta\leq\mu_{\mathrm{ac}}$.

Now, let $\eta$ be a measure, such that $\eta\leq\nu$ and $\eta\leq\mu-\mu_{\mathrm{ac}}$. In this case, $\mu_{\mathrm{ac}}+\eta\leq\mu$ and $\mu_{\mathrm{ac}}+\eta\ll\nu$, thus $\eta=0$. If $\mu=\mu_1+\mu_2$, where $\mu_1\ll\nu$ and $\mu_2\perp\nu$, then  $\mu_{\mathrm{ac}}-\mu_1$ is a measure, which is simultaneously $\nu$-absolutely continuous and $\nu$-singular. This yields that $\mu_1=\mu_{\mathrm{ac}}$.
\end{proof}

\end{document}